\newtheorem{theorem}{Theorem}
\newtheorem{definition}[theorem]{Definition}
\newtheorem{lemma}[theorem]{Lemma}
\newtheorem{remark}[theorem]{Remark}
\newenvironment{proof}[1][Proof]{\noindent \textbf{#1.} }{\  \rule{0.5em}{0.5em}}
\begin{document}

\begin{center}
{\Large Note on Approximation of Truncated Baskakov Operators By Fuzzy
Numbers}{\LARGE \ }

\begin{equation*}
\end{equation*}

Ecem Acar, Sevilay K\i rc\i \ Serenbay, Saleem Yaseen Majeed Al Khalidy%
\begin{equation*}
\end{equation*}

Harran University, Department of Mathematics, Turkey

karakusecem@harran.edu.tr,sevilaykirci@gmail.com,saleem.yaseen@garmian.edu.krd\}
\end{center}

Keywords: Fuzzy Numbers,Truncated Baskakov operators, Expected interval.

Subjclass[2010]:41A30, 41A46, 41A25.

\begin{center}
Abstract
\end{center}

In this paper, we firstly introduce nonlinear truncated Baskakov operators
on compact intervals and obtain some direct theorems. Also, we give the
approximation of fuzzy numbers by truncated nonlinear Baskakov operators. 

\bigskip 

\bigskip 

\bigskip 

\section{Introduction}

The concepts of fuzzy numbers and fuzzy arithmetic were introduced by Zadeh 
\cite{z}. The representation of fuzzy numbers by appropriate intervals that
depend mainly on the shape of their membership functions is an interesting
and important problem and has many applications in various fields. And it is
known that dealing with fuzzy numbers is often difficult because of the very
complex representation of the shapes of their membership functions. That is
to say, the interpretation and expression of fuzzy numbers are more
intuitive and more natural whenever the shapes of their membership functions
are simpler. Many studies have recently been published that investigate the
approximation of fuzzy numbers by trapezoidal or triangular fuzzy members
(see \cite{br5}-\cite{br26}).

The core topic of Korovkin type approximation theory is the approximation of
a continuous function by a series of linear positive operators (see \cite{k1}%
,\cite{k2}). Bede et al. \cite{bede1} have recently proposed nonlinear
positive operators in place of linear positive operators. Although the
Korovkin theorem fails for these nonlinear operators, they observed that
they behave similarly to linear operators in terms of approximation.

The main purpose of this paper is to use called Truncated Baskakov operator
of max-product kind which is given in the book \cite{bede1}, for
approximating fuzzy numbers with continuous membership functions. We will
show that these operators additionally maintain the quasi-concavity in a
manner analogous to the specific case of the unit interval. These results
turn out to be particularly useful in the approximation of fuzzy numbers
since they will enable us to construct fuzzy numbers with the same support
in a straightforward manner. Additionally, these operators provide a good
order of approximation for the (non-degenerate) segment core.

Baskakov \cite{b1} introduced the positive and linear operators, which are
typically associated to functions that are bounded and uniformly continuous
to $f\in C\left[ 0,+\infty \right] $ and specified by 
\begin{equation}
V_n\left(f \right)(x)=(1+x)^{-n}\sum_{k=0}^\infty \binom{n+k-1}{k}%
x^k(1+x)^{-k}f\left( \frac{k}{n}\right) , \  \forall n\in \mathbb{N}.
\end{equation}
It is known that the following pointwise approximation result (see \cite{f})
exists as: 
\begin{equation*}
\mid V_n\left(f \right)(x)-f(x)\mid \leq C\omega_2^{\varphi}\left( f;\sqrt{%
x(1+x)/n}\right), \ x\in \left[ 0,\infty \right) , n\in \mathbb{N}, 
\end{equation*}
where $\varphi(x)=\sqrt{x(1+x)}$ and $I=[0,\infty).$ In this case, \newline
$I_h=[h^2/(1-h)^2,+\infty), \ h\leq \delta<1.$ Additionally, $V_n\left(f
\right)$ preserves the monotonicity and convexity (of any order) of the
function $f$ on $[0, +\infty)$ (see \cite{lupas}).

The truncated Baskakov operators are defined 
\begin{equation*}
U_n(f)(x)=(1+x)^{-n}\sum_{k=0}^n\binom{n+k-1}{k}x^k(1+x)^{-k}f\left( \frac{k%
}{n}\right),\ f\in C[0,1]. 
\end{equation*}

Truncated Baskakov operator of max product kind $f:[0,1]\rightarrow \mathbb{R%
}$ are defined by (see \cite{bede1}) 
\begin{equation*}
U_n^{(M)}(f)(x)=\frac{\bigvee_{k=0}^nb_{n,k}(x)f\left( \frac{k}{n}\right)}{%
\bigvee_{k=0}^nb_{n,k}(x)}, \ x\in [0,1], n\in \mathbb{N}, \ n\geq 1, 
\end{equation*}
where $b_{n,k}(x)=\binom{n+k-1}{k}x^k(1+x)^{-n-k}$, $n\geq 1,$ $x\in [0,1].$
As it was proved in \cite{bede1}, Lemma 4.2.1, for any arbitrary function $%
f:[0,1]\rightarrow \mathbb{R}_+$ $U_n^{(M)}(f)(x)$ is positive, continuous
on $[0,1]$ and satisfies $U_n^{(M)}\left( f\right)(0)=f(0)$ for all $n\in 
\mathbb{N}, n\geq 2.$ In the paper \cite{bede2}, it was showed that the
order of uniform approximation in the whole class $C_+([0,1])$ of positive
continuous functions on $[0,1]$ cannot be improved, in the sense that there
exists a function $f \in C_+([0,1])$, for which the approximation order by
the truncated max-product Baskakov operator is $C\omega_1 (f,1/ \sqrt{n})$.
The fundamentally better order of approximation $\omega_1 (f,1/ n)$ was
attained for some functional subclasses, such as the nondecreasing concave
functions. Finally, some shape preserving properties were proved. In this
study, firstly we extend to an arbitrary compact interval the definition of
the truncated Baskakov operators of max-product kind, by proving that their
order of uniform approximation is the same as in the particular case of the
unit interval. Then, similarly to the particular case of the unit interval,
we are proved that these operators preserve the quasi-concavity. Since these
properties help us to generate in a simple way fuzzy numbers of the same
support, it turns out that these results are very suitable in the
approximation of fuzzy numbers.

\section{Preliminaries}

\begin{definition}
$\left[ 13,32\right] $\textbf{( fuzzy numbers) }A fuzzy subset $u$ of the
real line $%
\mathbb{R}
$ with membership function $\mu _{u}\left( x\right) :%
\mathbb{R}
\longrightarrow \left[ 0,1\right] $ is called a fuzzy number if:\medskip

\begin{enumerate}
\item $u$ is normal, i.e. $\exists x_{0}\in 
\mathbb{R}
$\ such that $\mu _{u}\left( x_{0}\right) =1;\medskip $

\item $u$ is fuzzy convex, i.e. $\mu _{u}\left( \lambda x+\left( 1-\lambda
\right) y\right) \geq \min \{ \mu _{u}\left( x\right) ,\mu _{u}\left(
y\right) \};\medskip $

\item $\mu _{u}$ is upper semicontinuous;and$\medskip $

\item $supp\left( u\right) $ is bounded, where supp$\left( u\right) =cl$ $%
\{x\in 
\mathbb{R}
\mid \mu _{u}\left( x\right) >0\},$where $\left( cl\right) $is the closure
operator .i.e. $supp\left( u\right) $ is compact.\bigskip
\end{enumerate}
\end{definition}

It is clear, for any fuzzy number $u$ there exist four numbers $%
t_{1},t_{2},t_{3},t_{4}\in 
\mathbb{R}
$ and two functions $l_{u},r_{u}:%
\mathbb{R}
\rightarrow \left[ 0,1\right] $ such that we can describe a membership

function $\mu _{u}$ in a following manner:\bigskip

$\mu _{u}(x)=\left \{ 
\begin{array}{c}
0 \\ 
l_{u}\left( x\right) \\ 
1 \\ 
r_{u}\left( x\right) \\ 
0%
\end{array}%
\right. 
\begin{array}{c}
\text{ }if\text{ \  \  \ }x<t_{1}\text{ \  \  \  \  \  \  \ } \\ 
if\text{ \  \  \ }t_{1}\leq x\leq t_{2} \\ 
if\text{ \  \  \ }t_{2}\leq x\leq t_{3} \\ 
if\text{ \  \  \ }t_{3}\leq x\leq t_{4} \\ 
\text{ \ }if\text{ \  \  \ }t_{4}<x\text{ \  \  \  \  \  \  \  \ }%
\end{array}%
$\bigskip

where $l_{u}:\left[ t_{1},t_{2}\right] \rightarrow \left[ 0,1\right] $ is
nondecreasing called the left side of a fuzzy number $u$ and $r_{u}:\left[
t_{3},t_{4}\right] \rightarrow \left[ 0,1\right] $ is nonincreasing called
the right side of a fuzzy number $u$.

Another representation of a fuzzy number is the so called $\alpha-cut$
representation also known as $LU$ parametric representation. In this case
the fuzzy number $u$ is given by a pair of functions $(u^-, u^+)$ where $%
u^-, u^+ : [0, 1]\rightarrow \mathbb{R}$ satisfy the following requirements:

\begin{itemize}
\item[i.] $u^-$ is nondecreasing;

\item[ii.] $u^+$ is nonincreasing;

\item[iii.] $u^-(1)\leq u^+(1)$
\end{itemize}

It is known that for $u = (u^-,u^+)$, we have $core(u) = [u^-(1),u^+(1)] $
and $supp(u) = [u^-(0),u^+(0)]$. An important connection between the
membership function of a fuzzy number $u$ and its parametric representation
is given by the following well known relations: 
\begin{equation*}
u^-(\alpha)=\inf \left \lbrace x\in \mathbb{R} : u(x)\geq \alpha \right
\rbrace 
\end{equation*}
\begin{equation*}
u^+(\alpha)=\sup \left \lbrace x\in \mathbb{R} : u(x)\geq \alpha \right
\rbrace , \alpha \in (0,1] 
\end{equation*}
and 
\begin{equation*}
[u^-(0),u^+(0)]=cl( \left \lbrace x\in \mathbb{R} : u(x)> 0 \right \rbrace ) 
\end{equation*}
where $cl$ denotes the closure operator. Moreover,in the case when $u$ is
continuous with $supp(u) = [a,b]$ and $core(u) =[c,d]$, then it can be
proved that 
\begin{equation*}
u(u^-(\alpha))=\alpha, \forall \alpha \in [a,c] 
\end{equation*}
\begin{equation*}
u(u^+(\alpha))=\alpha, \forall \alpha \in [d,b] 
\end{equation*}

Therefore a fuzzy number $u\in \mathbb{R}_{F}$ is completely determined by
the end points of the intervals%
\begin{equation*}
\left[ u\right] _{\alpha }=\left[ u^{-}\left( \alpha \right) ,u^{+}\left(
\alpha \right) \right] ,\forall \alpha \in \lbrack 0,1]. 
\end{equation*}%
Therefore we can identify a fuzzy number $u\in \mathbb{R}_{F}$ with its
parametric representation 
\begin{equation*}
\left \{ \left( u^{-}\left( \alpha \right) ,u^{+}\left( \alpha \right)
\right) \mid 0\leq \alpha \leq 1\right \} 
\end{equation*}
and we can write $u=\left( u^{-},u^{+}\right) .$

Then, we correlateto fuzzy numbers some important chracterisctics.

\textit{The expected interval} $EI(u)$ of a fuzzy number $u$ where $\left[ u%
\right] _{\alpha }=\left[ u^{-}\left( \alpha \right) ,u^{+}\left( \alpha
\right) \right] $ $\forall \alpha \in \lbrack 0,1]$, is defined by (see $%
[14,30]$)\bigskip

$EI(u)=\left[ \underset{0}{\overset{1}{\int }}u^{-}\left( \alpha \right)
d\alpha ,\underset{0}{\overset{1}{\int }}u^{+}\left( \alpha \right) d\alpha %
\right] $\bigskip

and \textit{the expected value }of the fuzzy number $u$ by (see $[30]$%
)\bigskip

$EV\left( u\right) =\frac{1}{2}\underset{0}{\overset{1}{\int }}\left(
u^{-}\left( \alpha \right) +u^{+}\left( \alpha \right) \right) d\alpha $%
\bigskip

\textit{In particular}, $EI(u)$ can be considered as a fuzzy number and more
precisely an interval fuzzy number.\bigskip

\textit{The width} of a fuzzy number $u$ is given by (see $[28]$)\bigskip

$wid\left( u\right) =\underset{0}{\overset{1}{\int }}\left( u^{+}\left(
\alpha \right) -u^{-}\left( \alpha \right) \right) d\alpha $\bigskip

Let $s:\left[ 0,1\right] \rightarrow \left[ 0,1\right] $ is a nondecreasing
function such that $s(0)=0$ and $s(1)=1$, sometimes called a reduction
function.

\textit{The value} of $u$ with respect to $s$ is given by (see $[19]$%
)\bigskip

$Val_{s}\left( u\right) =\underset{0}{\overset{1}{\int }}s\left( \alpha
\right) \left( u^{-}\left( \alpha \right) +u^{+}\left( \alpha \right)
\right) d\alpha $\bigskip

and \textit{the ambiguity }of a fuzzy number $u$ with respectto $s$ is
defined by (see $[19]$)\bigskip

$Amb_{s}\left( u\right) =\underset{0}{\overset{1}{\int }}s\left( \alpha
\right) \left( u^{+}\left( \alpha \right) -u^{-}\left( \alpha \right)
\right) d\alpha $\bigskip

Also we have the particular case of reduction function $s$ for $s\left(
\alpha \right) =\alpha ^{r}$ , $r\in 
\mathbb{N}
$ and $\alpha \in \left[ 0,1\right] $, then we denote $Val_{sr}\left(
u\right) =Val_{r}\left( u\right) $ and $Amb_{sr}\left( u\right)
=Amb_{r}\left( u\right) $, (see $[19]$) i.e.\bigskip

$Val_{r}\left( u\right) =\underset{0}{\overset{1}{\int }}\alpha ^{r}\left(
u^{-}\left( \alpha \right) +u^{+}\left( \alpha \right) \right) d\alpha $%
\bigskip

and\bigskip

$Amb_{r}\left( u\right) =\underset{0}{\overset{1}{\int }}\alpha ^{r}\left(
u^{+}\left( \alpha \right) -u^{-}\left( \alpha \right) \right) d\alpha $%
\bigskip

\section{Truncated Baskakov Operators defined on compact intervals}

From Theorem \ref{t6}, one can see that the order of uniform approximation
of the fuzzy number $u$ by $\widetilde{F}_{n}^{\left( M\right) }\left( u;%
\left[ \vartheta, b\right] \right)$ on $\mathbb{R}$ is $\omega_1(u;1/\sqrt{n}%
)_{[\vartheta, b]}.$

All throughout this paper, we indicate the continuous function space defined
on interval $I$ by $C(I)$ and the positive continuous function space defined
on interval $I$ by $C_+(I)$. From the result of Weierstrass theorem (see 
\cite{w}), $P(x)$ converges to continuous function $f(x)$ in the interval $%
[0,1]$, we just have to move functions from $[0,1]$ to an arbitrary interval 
$[a,b].$ In fact, let consider the continuous function $g:[0,1]\rightarrow 
\mathbb{R}$ and the function $f(x)$ is continuouson $[a,b],$ we put $%
g(y)=f(a+(b-a)y).$

If u is a continuous fuzzy number with $supp(u)=[a,b]$, $a<b$ and $%
core(u)=[c,d]$, $c<d.$ then we can define

\begin{equation*}
\widetilde{U}_{n}\left( u\right)(x) =\left \{ 
\begin{array}{l}
0,\text{ }x\notin \left[a,b\right] \\ 
U_{n}\left( u;\left[a,b\right] \right)=\sum_{k=0}^n b_{n,k}(x)u\left( a+(b-a)%
\frac{k}{n}\right) ,x \in \left[a,b\right] .%
\end{array}%
\right. 
\end{equation*}
where $b_{n,k}(x)=\binom{n+k-1}{k}\left( \frac{x-a}{b-a}\right) ^k(\frac{%
b-2a+x}{b-a})^{-n-k}$.

\begin{equation*}
\label{f7} U_n^{(M)}\left(f;[a,b] \right) (x)= \frac{%
\bigvee_{k=0}^nb_{n,k}(x)f\left( a+(b-a)\frac{k}{n}\right)}{%
\bigvee_{k=0}^nb_{n,k}(x)}, \ x\in [a,b] 
\end{equation*}
where $b_{n,k}(x)=\binom{n+k-1}{k}\left( \frac{x-a}{b-a}\right) ^k(\frac{%
b-2a+x}{b-a})^{-n-k}$.

We have $\sum_{k=0}^nb_{n,k}(x)=1$ for all $x\in [a,b]$ so we get that $%
\bigvee_{k=0}^nb_{n,k}(x)> 0$ which means that $U_n^{(M)}\left(f;[a,b]
\right)$ is well defined.

Since the maximum of a finite number of continuous functions is a continuous
function, we get that for any $f\in C_+\left( [a,b]\right),$ $%
U_n^{(M)}\left(f;[a,b] \right)\in C_+\left( [a,b]\right).$ In this section
we will prove that $U_n^{(M)}: C_+\left( [a,b]\right)\rightarrow C_+\left(
[a,b]\right)$ has the same order of uniform approximation as the linear
Baskakov operator and that it preserves the quasi-concavity too. Firstly, we
need the following results and definitions.

\begin{theorem}
\label{t2}

\begin{itemize}
\item[i.] (\cite{bede2},Theorem 4.1)If $f:[0,1]\rightarrow \mathbb{R}_+$ is
continuous then we have the estimate 
\begin{equation*}
\mid U_n^{(M)}\left(f;[0,1] \right) (x)-f(x)\mid \leq 24\omega_1\left(f;%
\frac{1}{\sqrt{n+1}}\right)_{[0,1]}, \ n\in \mathbb{N}, \ n\geq 2, \ x\in
[0,1].
\end{equation*}

\item[ii.] (\cite{bede2}, Corollary 4.5) If $f:[0,1]\rightarrow \mathbb{R}_+$
is concave then we have the estimate 
\begin{equation*}
\mid U_n^{(M)}\left(f;[0,1] \right) (x)-f(x)\mid \leq 2\omega_1\left(f;\frac{%
1}{n}\right)_{[0,1]}, \ n\in \mathbb{N}, \ x\in [0,1].
\end{equation*}
\end{itemize}
\end{theorem}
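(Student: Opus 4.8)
The plan is to obtain both estimates from the general machinery for max-product operators. First I would record the structural properties of $U_n^{(M)}(\,\cdot\,;[0,1])$: it is monotone (if $f\le g$ on the grid then $U_n^{(M)}(f)\le U_n^{(M)}(g)$), positively homogeneous, sublinear ($U_n^{(M)}(f+g)\le U_n^{(M)}(f)+U_n^{(M)}(g)$, because $\bigvee_k(a_k+b_k)\le\bigvee_k a_k+\bigvee_k b_k$), and it reproduces constants, $U_n^{(M)}(1;[0,1])(x)=1$, since $\sum_{k=0}^n b_{n,k}(x)=1$ forces $\bigvee_{k=0}^n b_{n,k}(x)>0$. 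From these four facts one gets, for every $\delta>0$ and every $f\in C_+([0,1])$, the pointwise bound
\begin{equation*}
\bigl|U_n^{(M)}(f;[0,1])(x)-f(x)\bigr|\le\Bigl(1+\tfrac{1}{\delta}\,U_n^{(M)}(\varphi_x;[0,1])(x)\Bigr)\,\omega_1(f;\delta)_{[0,1]},\qquad \varphi_x(t)=|t-x|,
\end{equation*}
proved by inserting $f(t)\le f(x)+(1+|t-x|/\delta)\,\omega_1(f;\delta)$ into $U_n^{(M)}$, using homogeneity, sublinearity and the reproduction of constants, and arguing symmetrically for the lower bound. Thus part (i) reduces to estimating $E_n(x):=U_n^{(M)}(\varphi_x;[0,1])(x)=\dfrac{\bigvee_{k=0}^n b_{n,k}(x)\,|k/n-x|}{\bigvee_{k=0}^n b_{n,k}(x)}$.

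For part (i) I would locate the dominating index of $k\mapsto b_{n,k}(x)$ via the ratio $\dfrac{b_{n,k+1}(x)}{b_{n,k}(x)}=\dfrac{n+k}{k+1}\cdot\dfrac{x}{1+x}$, which is $\ge 1$ exactly while $k\le (n-1)x-1$; hence $b_{n,k}(x)$ is unimodal with peak at $k^\ast=\lfloor (n-1)x-1\rfloor$, and $[0,1]$ splits into subintervals of the form $\bigl[\frac{j}{n+1},\frac{j+1}{n+1}\bigr]$ on each of which a definite $b_{n,j}$ dominates all the others. On such a subinterval I would bound each term $\dfrac{b_{n,k}(x)}{b_{n,j}(x)}\,|k/n-x|$ by expanding $\dfrac{b_{n,k}(x)}{b_{n,j}(x)}$ as a telescoping product of the above ratios, treating $k<j$ and $k>j$ separately; the products become negligible once $|k/n-x|$ exceeds $O(1/\sqrt n)$, so the maximum over $k$ of these quantities is $O(1/\sqrt n)$, uniformly in $x$. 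This yields $E_n(x)\le c/\sqrt{n+1}$ with an explicit $c$; taking $\delta=1/\sqrt{n+1}$ in the master inequality and collecting all constants produces the factor $24$.

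For part (ii) the same $\varphi_x$-route is powerless, since it only gives order $1/\sqrt n$; here I would use concavity directly and estimate $U_n^{(M)}(f)(x)-f(x)$ from both sides. The lower bound is cheap for any continuous $f$: taking $k=k^\ast$ gives $U_n^{(M)}(f)(x)\ge f(k^\ast/n)$, and $|k^\ast/n-x|\le 2/n$ from the peak formula above, so $U_n^{(M)}(f)(x)\ge f(x)-\omega_1(f;2/n)_{[0,1]}\ge f(x)-2\omega_1(f;1/n)_{[0,1]}$. The upper bound is where concavity enters: using that the chords of $f$ have decreasing slopes (equivalently $f(\lambda t)\ge\lambda f(t)$ for $\lambda\in[0,1]$), one controls $f(k/n)$ in terms of $f(x)$ so that the weighted value $\dfrac{b_{n,k}(x)}{b_{n,j}(x)}f(k/n)$ never exceeds $f(x)+\omega_1(f;1/n)_{[0,1]}$ for any $k$ — roughly because where the weight ratio is not small, $|k/n-x|=O(1/n)$, and where $|k/n-x|$ is large the weight ratio kills the (linearly bounded) value. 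Combining the two one-sided estimates gives $|U_n^{(M)}(f)(x)-f(x)|\le 2\,\omega_1(f;1/n)_{[0,1]}$.

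The main obstacle is the combinatorial tail estimate underlying part (i): identifying the dominating $b_{n,j}$ for each $x$ and showing that $\bigvee_k b_{n,k}(x)\,|k/n-x|$ is no more than a $O(1/\sqrt n)$-multiple of $\bigvee_k b_{n,k}(x)$ demands a careful case analysis of the products of the ratios $b_{n,k+1}(x)/b_{n,k}(x)$ on both sides of the peak — this is where all the real work lies, and it is what pins down the explicit constant $24$; part (ii) is technically lighter but requires the separate concavity argument sketched above rather than the modulus machinery.
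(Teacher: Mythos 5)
The paper itself contains no proof of this statement: Theorem \ref{t2} is quoted verbatim from \cite{bede2} (Theorem 4.1 and Corollary 4.5) and is used downstream as an imported result, so there is no internal argument to compare yours against; the comparison has to be with the proof in that source. Measured against it, your outline is the right one. Part (i) is indeed obtained there from the Shisha--Mond-type master inequality valid for monotone, positively homogeneous, sublinear, constant-reproducing max-product operators, which reduces everything to the bound $U_n^{(M)}(\varphi_x;[0,1])(x)\leq C/\sqrt{n+1}$, and that bound is in turn obtained by identifying the dominating weight $b_{n,j}$ on each subinterval and estimating the ratios $b_{n,k}(x)/b_{n,j}(x)$ on both sides of the peak.

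Two concrete issues remain. First, an indexing slip: the subintervals on which a single $b_{n,j}$ dominates are $\left[j/(n-1),(j+1)/(n-1)\right]$, not $\left[j/(n+1),(j+1)/(n+1)\right]$. Your own ratio computation $b_{n,k+1}(x)/b_{n,k}(x)=\frac{n+k}{k+1}\cdot\frac{x}{1+x}\geq 1$ iff $k+1\leq (n-1)x$ places the peak at $\lfloor (n-1)x\rfloor$, and the $(n-1)$-grid is also what the present paper uses later (Lemma \ref{12} and formula (\ref{f8})). Second, and more importantly, the entire quantitative content of (i) --- that $\bigvee_{k}b_{n,k}(x)\,|k/n-x|\leq \left(24/\sqrt{n+1}\right)b_{n,j}(x)$ with the specific constant $24$ --- is asserted rather than derived; the telescoping-product case analysis you allude to is precisely the body of the proof in \cite{bede2} and cannot be waved through, since you must land on exactly the constant quoted. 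For (ii), the lower bound via the peak index is fine, and your heuristic for the upper bound is the correct mechanism, but it needs to be made precise: in \cite{bede2} one first treats nondecreasing concave $f$, showing $f_{k,n,j}(x)\leq f((j+1)/n)$ for all $k$ by combining $m_{k,n,j}(x)\leq 1$ with the concavity inequality $f(\lambda t)\geq\lambda f(t)$ (valid since $f\geq 0$), and then handles a general concave $f$ by splitting $[0,1]$ at its maximum point into two monotone concave pieces. So the proposal is a correct roadmap that matches the cited source's strategy, but the decisive estimates in both parts are still to be executed.
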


\begin{theorem}
\label{t3}  Let us consider the function $f:[0,1]\rightarrow \mathbb{R}_+$
and let us fix $n \in \mathbb{N}, \ n\geq 2.$ Suppose, in addition, that
there exists $c\in [0,1]$ such that $f$ is nondecreasing on $[0,c]$ and
nonincreasing on $[c,1].$ Then, there exists $c^{\prime }\in [0,1]$ such
that $U_n^{(M)}\left(f \right)$ is nondecreasing on $[0,c^{\prime }]$ and
nonincreasing on $[c^{\prime },1].$ In addition, we have $\mid c-c^{\prime
}\mid \leq 1/(n+1)$ and $\mid U_n^{(M)}\left(f \right)(c)-f(c)\mid \leq
\omega_1\left( f;1/(n+1)\right).$
\end{theorem}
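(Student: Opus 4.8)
The plan is to reduce the statement about the max-product operator $U_n^{(M)}(f)$ on the unit interval to the corresponding structural facts about the max-product basis functions $b_{n,k}(x) = \binom{n+k-1}{k}x^k(1+x)^{-n-k}$. First I would recall from \cite{bede2} (or re-derive) the comparison behavior of consecutive basis functions: for each fixed $x \in [0,1]$ there is an index $\overline{k}(x)$ at which $\bigvee_{k=0}^n b_{n,k}(x)$ is attained, and $\overline{k}(x)$ is nondecreasing in $x$. The key quantitative input is the location of the crossover points $x_{k}$ where $b_{n,k}(x) = b_{n,k+1}(x)$; from the explicit formula one computes $b_{n,k+1}(x)/b_{n,k}(x) = \frac{n+k}{k+1}\cdot\frac{x}{1+x}$, so the maximizing index jumps from $k$ to $k+1$ at $x_k$ with $\frac{x_k}{1+x_k} = \frac{k+1}{n+k}$, i.e. $x_k = \frac{k+1}{n-1}$. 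In particular, on the interval where $\overline{k}(x)=k$, namely $x \in [x_{k-1},x_k]$, one has $U_n^{(M)}(f)(x) = \bigvee_{j=0}^n \frac{b_{n,j}(x)}{b_{n,k}(x)} f(j/n)$, and each ratio $b_{n,j}(x)/b_{n,k}(x)$ is monotone in $x$ (increasing if $j>k$, decreasing if $j<k$) on that subinterval.

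Next I would prove the monotonicity splitting. Let $k^{*}$ be the index with $c \in [k^{*}/n,\, (k^{*}+1)/n)$ (roughly), so that $f(j/n)$ is nondecreasing for $j \le k^{*}$ and nonincreasing for $j \ge k^{*}$ along the grid. On each crossover subinterval $[x_{k-1},x_k]$ with $k \le k^{*}$, I claim $U_n^{(M)}(f)$ is nondecreasing: writing it as $\bigvee_j \frac{b_{n,j}(x)}{b_{n,k}(x)}f(j/n)$, the terms with $j \ge k$ have nondecreasing coefficients $b_{n,j}/b_{n,k}$ and (since $k \le k^{*}$) these carry the values $f(k/n),\dots,f(n/n)$, among which the largest relevant one is $f(k/n)$ or a value reached by climbing; a careful but routine argument shows the pointwise maximum is nondecreasing. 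Symmetrically, on subintervals with $k \ge k^{*}+1$ the operator is nonincreasing. Piecing these together yields a single point $c' \in [0,1]$ — the right endpoint of the last "increasing" subinterval — such that $U_n^{(M)}(f)$ is nondecreasing on $[0,c']$ and nonincreasing on $[c',1]$. Since $c'$ lies in the subinterval $[x_{k^{*}-1},x_{k^{*}}]$ (or an adjacent one) and $c$ lies within $O(1/n)$ of $k^{*}/n$, the estimate $|c-c'| \le 1/(n+1)$ follows from the spacing $x_k - x_{k-1} = \frac{1}{n-1}$ together with a slightly sharper bookkeeping of the crossover indices; this is where one must be most careful to get the constant $1/(n+1)$ rather than merely $O(1/n)$.

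For the final estimate $|U_n^{(M)}(f)(c) - f(c)| \le \omega_1(f; 1/(n+1))$, I would use that at $x=c$ the maximizing index $\overline{k}(c)$ satisfies $|\overline{k}(c)/n - c| \le 1/(n+1)$ — which is exactly the content of the crossover computation, since $c$ sits between $x_{\overline{k}(c)-1}$ and $x_{\overline{k}(c)}$, forcing $\overline{k}(c)/n$ to be within one mesh-step of $c$. Then, because $f$ is nondecreasing on $[0,c]$ and nonincreasing on $[c,1]$, the value $f(\overline{k}(c)/n)$ lies between $f(c)$ and some nearby grid value, and the max-product average $U_n^{(M)}(f)(c)$ is squeezed between $f(\overline{k}(c)/n)$ and $f(c)$ (the unimodality of $f$ prevents overshoot past $f(c)$ on the high side and the dominant term pins it from below), giving $|U_n^{(M)}(f)(c)-f(c)| \le |f(\overline{k}(c)/n)-f(c)| \le \omega_1(f;1/(n+1))$.

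The main obstacle I anticipate is the precise crossover bookkeeping needed to upgrade the natural $O(1/n)$ bounds to the stated constant $1/(n+1)$: one has to match the index $k^{*}$ extracted from the condition on $f$ against the maximizing index $\overline{k}(x)$ of the basis, and the two can differ by one, so the argument must be organized so that this off-by-one does not inflate the constant. A secondary technical point is justifying that the pointwise maximum of the monotone ratio-terms is itself monotone on each crossover subinterval — true because a sup of nondecreasing functions is nondecreasing, but one must check that on $[0,c']$ only the "nondecreasing side" of terms can be the active maximizer, which again leans on the monotonicity of $\overline{k}(x)$ and the unimodality of $f$.
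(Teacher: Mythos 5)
Your proposal follows essentially the same route as the paper: partition $[0,1]$ by the crossover points of the basis functions (which you correctly compute to be $x_k=(k+1)/(n-1)$, giving exactly the intervals $[j/(n-1),(j+1)/(n-1)]$ that the paper imports from Lemma 3.2 of the Bede--Coroianu--Gal reference), represent $U_n^{(M)}(f)$ on each piece as $\bigvee_{k} m_{k,n,j}(x)f(k/n)$, discard the dominated terms using the monotonicity of $m_{k,n,j}$ in $k$ together with the unimodality of $f$ so that what remains is a maximum of monotone functions, and bound the error at $c$ from below by the single term $m_{j_c,n,j_c}(c)f(j_c/n)=f(j_c/n)$ and from above by $f(c)$. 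One remark worth recording: the obstacle you flag about upgrading the natural $O(1/n)$ bounds to the stated constant $1/(n+1)$ is real and is not actually overcome in the paper either --- its proof only delivers $|c-c'|\le 1/(n-1)$ (the mesh of the crossover partition), and since $j_c/n$ lies to the left of the interval $[j_c/(n-1),(j_c+1)/(n-1)]$ containing $c$, the distance $|c-j_c/n|$ can be as large as roughly $2/n$, so the constants $1/(n+1)$ in the statement appear to be carried over from the Bernstein-type analogue rather than established by the argument given.
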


\begin{proof}
Let $j_c\in \left \lbrace 0,1,\cdots, n-2 \right \rbrace $ be such that $%
\left[\frac{j_c}{n-1},\frac{j_c+1}{n-1} \right].$ We will study the
monotonicity on each interval of the form $\left[\frac{j}{n-1},\frac{j+1}{n-1%
} \right],$ $j\in \left \lbrace 0,1,\cdots, n-2 \right \rbrace.$ Then by the
continuity of $U_n^{(M)}\left(f\right),$ we will be able to determine the
monotoncity of $U_n^{(M)}\left(f\right)$ on $[0,1].$ Let us choose arbitrary 
$j\in \left \lbrace 0,1,\cdots, j_c-1 \right \rbrace$ and $x\in \left[\frac{j%
}{n-1},\frac{j+1}{n-1} \right].$ By the monotonicity of $f$, it follows that 
$f\left(\frac{j}{n} \right)\geq f\left(\frac{j-1}{n} \right)\geq \cdots \geq
f(0).$ From \cite{bede2}(proof of Lemma 3.2) the following assertions hold: 
\begin{equation}  \label{yıldız}
\text{If} \ j\leq k\leq k+1\leq n \  \text{then} 1\geq m_{k,n,j}(x)\geq
m_{k+1,n,j}(x),
\end{equation}
\begin{equation}
\text{If} \ 0\leq k\leq k+1\leq j \  \text{then} m_{k,n,j}(x)\leq
m_{k+1,n,j}(x)\leq 1.
\end{equation}
Therefore, it is easily follows that $f_{j,n,j}(x)\geq f_{j-1,n,j}(x)\geq
\cdots \geq f_{0,n,j}(x).$ From \cite{bede2} lemma 3.4, it follows that 
\begin{equation*}
U_n^{(M)}\left(f\right)=\bigvee_{k=j}^n f_{k,n,j}(x).
\end{equation*}
Since $U_n^{(M)}\left(f\right)$ is defined as the maximum of nondecreasing
functions, it follows that it is nondecreasing on $\left[\frac{j}{n-1},\frac{%
j+1}{n-1} \right].$ Taking into account the continuity of $%
U_n^{(M)}\left(f\right),$ it is immediate that $f$ is nondecreasing on $%
\left[0,\frac{j_c}{n-1} \right].$ Now let us choose arbitrary $j\in \left
\lbrace j_c+1,\cdots, n-2 \right \rbrace$ and $x\in \left[\frac{j}{n-1},%
\frac{j+1}{n-1} \right].$ By the monotonicity of $f,$ it follows that $%
f\left(\frac{i}{n} \right)\geq f\left(\frac{i+1}{n} \right)\geq \cdots \geq
f(1).$ It easily follows that $U_n^{(M)}\left(f\right)(x)=\bigvee_{k=0}^j
f_{k,n,j}(x)$ from the assertion (\ref{yıldız}). Since $%
U_n^{(M)}\left(f\right)$ is defined as the maximum of nonincreasing
functions, it follows that it is nonincreasing on $\left[\frac{j}{n-1},\frac{%
j+1}{n-1} \right].$ Taking into account the continuity of $%
U_n^{(M)}\left(f\right)$ , it is immediate that $f$ is nonincreasing on $%
\left[\frac{j_c+1}{n-1},1 \right].$ Finially let us discuss the case when $%
j=j_c.$ If $\frac{j}{n-1}\leq c,$ then by the monotoncity of $f$ it follows
that $f\left(\frac{j_c}{n} \right)\geq f\left(\frac{j_c-1}{n} \right)\geq
\cdots \geq f(0).$ Therefore , in the case we obtain $f$ is nondecreasing on 
$\left[\frac{j_c}{n-1},\frac{j_c+1}{n-1} \right].$ It follows that $f$ is
nondecreasing on $\left[0,\frac{j_c+1}{n-1} \right]$ and nonincreasing on $%
\left[\frac{j_c+1}{n-1},1 \right].$ In addition, $c^{\prime }=\frac{j_c+1}{%
n-1}$ is the maximum point of $U_n^{(M)}\left(f\right)$ and it is easy to
check that $\mid c-c^{\prime }\mid \leq \frac{1}{n-1}.$ If $j_c/n\geq c$
then by the monotonicity of $f$ it follows that$f\left(\frac{j_c}{n}
\right)\geq f\left(\frac{j_c+1}{n} \right)\geq \cdots \geq f(1).$ Therefore,
in this case we obtain that $f$ is nonincreasing on $\left[\frac{j_c}{n-1},%
\frac{j_c+1}{n-1} \right].$ It follows that $f$ is nondecreasing on $[0, 
\frac{j_c}{n-1}]$  and nonincreasing on $[\frac{j_c}{n-1},1].$ In addition,$%
c^{\prime }=\frac{j_c}{n-1}$ is the maximum point of $U_n^{(M)}\left(f\right)
$ and it is easy to check that $\mid c-c^{\prime }\mid \leq \frac{1}{n-1}.$

We prove now the last part of the theorem. First, let us notice that $%
U_n^{(M)}\left(f\right)\leq f(c)$ for all $x\in [0,1].$ Indeed, this is
immediate by the definition of $U_n^{(M)}\left(f\right)$ and by the fact
that $c$ is the global maximum point of $f$. This implies  
\begin{equation*}
\begin{split}
\mid U_n^{(M)}\left(f\right)(c)-f(c)\mid =& f(c)-
U_n^{(M)}\left(f\right)(c)=f(c)-\bigvee_{k=0}^n f_{k,n,j_c}(c) \\
& \leq f_{j_c,n,j_c}(c)=f(c)-f\left( \frac{j_c}{n}\right).
\end{split}%
\end{equation*}
Since $c,\frac{j_c}{n} \in \left[\frac{j}{n-1},\frac{j+1}{n-1} \right],$ we
easily get $f(c)-f\left( \frac{j_c}{n}\right)\leq \omega_1\left(f;?1/n+1?
\right) $ and the theorem is proved completely.
\end{proof}

\begin{definition}
(\cite{gal}) Let $f:[a,b]\rightarrow \mathbb{R}$ be continuous on $[a,b]$.
The function $f$ is called:

\begin{itemize}
\item[i.] quasi-convex if $f(\lambda x+(1-\lambda)y)\leq \max \left \lbrace
f(x),f(y)\right \rbrace, $ for all $x,y\in [a,b],$ $\lambda \in [0,1]$

\item[ii.] quasi-concave, if $-f$ is quasi-convex.
\end{itemize}
\end{definition}

\begin{remark}
\label{r3.1} By \cite{pop}, the continuous function $f$ is quasi-convex on $%
[a,b]$ equivalently means that there exists a point $c\in [a,b]$ such that $f
$ is nonincreasing on $[a,c]$ and nondecreasing on $[c,b].$ From the above
definiton, we easily get that the function $f$ is quasi-concave on $[a,b],$
equivalently means that there exists a point $c\in [a,b]$ such that $f$ is
nondecreasing on $[a,c]$ and nonincreasing on $[c,b].$
\end{remark}

We can now present the main results of this section.

\begin{theorem}
\label{t6}

\begin{itemize}
\item[i.] If $a,b\in \mathbb{R}$, $a<b$ and $f:[a,b]\rightarrow \mathbb{R}_+$
is continuous then we have the estimate 
\begin{equation*}
\mid U_n^{(M)}\left(f;[a,b] \right) (x)-f(x)\mid \leq
24\left([b-a]+1\right)\omega_1\left(f;\frac{1}{\sqrt{n+1}}\right)_{[a,b]}, \
n\in \mathbb{N}, \ n\geq 2, \ x\in [a,b].
\end{equation*}

\item[ii.] If $f:[a,b]\rightarrow \mathbb{R}_+$ is concave then we have the
estimate 
\begin{equation*}
\mid U_n^{(M)}\left(f;[a,b] \right) (x)-f(x)\mid \leq
2\left([b-a]+1\right)\omega_1\left(f;\frac{1}{n}\right)_{[a,b]}, \ n\in 
\mathbb{N}, \ x\in [a,b].
\end{equation*}
\end{itemize}
\end{theorem}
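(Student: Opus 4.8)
The plan is to reduce everything to the unit-interval estimates of Theorem~\ref{t2} by an affine change of variable. First I would record the elementary identity that links the two scales. For $x\in[a,b]$ set $y=\frac{x-a}{b-a}\in[0,1]$; then $\frac{b-2a+x}{b-a}=1+y$, so the basis functions on $[a,b]$ satisfy
\[
b_{n,k}(x)=\binom{n+k-1}{k}\left(\frac{x-a}{b-a}\right)^k\left(\frac{b-2a+x}{b-a}\right)^{-n-k}=\binom{n+k-1}{k}y^k(1+y)^{-n-k},
\]
which are precisely the unit-interval basis functions evaluated at $y$. Since moreover $f\bigl(a+(b-a)\tfrac{k}{n}\bigr)=g\bigl(\tfrac{k}{n}\bigr)$ with $g(y):=f\bigl(a+(b-a)y\bigr)$, substituting into the definition of $U_n^{(M)}(f;[a,b])$ and cancelling gives the pointwise identity
\[
U_n^{(M)}\bigl(f;[a,b]\bigr)(x)=U_n^{(M)}\bigl(g;[0,1]\bigr)(y),\qquad x\in[a,b],
\]
where $g\in C_+([0,1])$ because $f\in C_+([a,b])$ and $y\mapsto a+(b-a)y$ is continuous.

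Next I would apply Theorem~\ref{t2}(i) to $g$: for $n\geq2$,
\[
\bigl|U_n^{(M)}(f;[a,b])(x)-f(x)\bigr|=\bigl|U_n^{(M)}(g;[0,1])(y)-g(y)\bigr|\leq 24\,\omega_1\!\left(g;\tfrac{1}{\sqrt{n+1}}\right)_{[0,1]}.
\]
It remains to transfer the modulus of continuity from $g$ to $f$. Because $\bigl|(a+(b-a)y_1)-(a+(b-a)y_2)\bigr|=(b-a)|y_1-y_2|$, one has $\omega_1(g;\delta)_{[0,1]}=\omega_1\bigl(f;(b-a)\delta\bigr)_{[a,b]}$ for every $\delta>0$, and the standard semi-additivity of the modulus of continuity yields $\omega_1\bigl(f;(b-a)\delta\bigr)_{[a,b]}\leq\bigl([b-a]+1\bigr)\omega_1(f;\delta)_{[a,b]}$. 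Chaining these estimates gives part~(i). Part~(ii) is identical, using that the affine substitution $y\mapsto a+(b-a)y$ preserves concavity, so $g$ is concave whenever $f$ is; one then invokes Theorem~\ref{t2}(ii) instead of (i), which replaces the constant $24$ by $2$ and $\frac{1}{\sqrt{n+1}}$ by $\frac1n$, and the same rescaling of the modulus completes the proof.

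The only points requiring genuine care — and hence what I would regard as the (mild) main obstacle — are the algebra showing $b_{n,k}(x)$ coincides with the unit-interval weight under the substitution, the observation that this substitution does not disturb the numerator/denominator maxima so that the max-product structure is truly preserved, and the rescaling inequality $\omega_1(f;\lambda\delta)\leq(\lambda+1)\,\omega_1(f;\delta)$ applied with $\lambda=b-a$. None of these is deep, so once the change of variable is set up correctly the argument is essentially bookkeeping.
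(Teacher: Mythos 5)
Your proposal is correct and follows essentially the same route as the paper: the affine substitution $y=(x-a)/(b-a)$ identifying $U_n^{(M)}(f;[a,b])(x)$ with $U_n^{(M)}(g;[0,1])(y)$, an appeal to Theorem~\ref{t2}, and the rescaling $\omega_1(f;\lambda\delta)_{[a,b]}\leq([\lambda]+1)\,\omega_1(f;\delta)_{[a,b]}$ with $\lambda=b-a$, including the same concavity-transfer argument for part (ii). No substantive differences.
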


\begin{proof}
Let take the function $h(y)$ is continuous on $[0,1]$ as $h(y)=f(a+(b-a)y).$
It is easy to check that $h(\frac{k}{n})=f\left( a+k.\frac{(b-a)}{n}\right) $
for all $k\in \left \lbrace 0,1,\cdots,n\right \rbrace.$ Now let choose
arbitrary $x\in [a,b]$ and let $y\in [0,1]$ such that $x=a+(b-a)y.$ This
implies $y=(x-a)/(b-a)$ and $1+y=\frac{b+x-2a}{b-a}.$ From these equalities
and noting the expressions for $h\left( \frac{k}{n}\right),$ we obtain $%
U_n^{(M)}\left(f;[a,b] \right)(x)=U_n^{(M)}\left(h;[0,1] \right)(y).$ From
Theorem \ref{t2} 
\begin{equation}
\mid U_n^{(M)}\left(f;[a,b] \right)(x)-f(x)\mid=\mid U_n^{(M)}\left(h;[0,1]
\right)(y)-h(y)\mid \leq 24\omega_1\left(h;\frac{1}{\sqrt{n+1}}%
\right)_{[0,1]}.
\end{equation}
Since $\omega_1\left(h;\frac{1}{\sqrt{n+1}}\right)_{[0,1]}\leq
\omega_1\left(f;\frac{b-a}{\sqrt{n+1}}\right)_{[a,b]}$ and the property $%
\omega_1\left( f;\lambda \delta \right)_{[a,b]}\leq \left(
[\lambda]+1\right)\omega_1\left( f;\delta \right)_{[a,b]},$ we obtain $%
\omega_1\left(h;\frac{1}{\sqrt{n+1}}\right)_{[0,1]}\leq \left([b-a]+1\right)
\omega_1\left(f;\frac{1}{\sqrt{n+1}}\right)_{[a,b]}$ which proves (i).

Keeping the notation from the above point (i), we get $U_n^{(M)}%
\left(f;[a,b] \right)(x)=U_n^{(M)}\left(h;[0,1] \right)(y),$ where $%
h(y)=f(a+(b-a)y)=f(x)$ for all $y\in [0,1].$ The last equality is equivalent
to $f(u)=h\left( \frac{u-a}{b-a}\right) $ for all $u\in [a,b].$ Writing now
the property of concavity for $f,$ $f(\lambda u_1+(1-\lambda)u_2)\geq
\lambda f(u_1)+(1-\lambda)f(u_2),$ for all $\lambda \in [0,1],$ $u_1,u_2 \in
[a,b],$ in terms of $h$ can be written as 
\begin{equation*}
h\left( \lambda \frac{u_1-a}{b-a}+(1-\lambda)\frac{u_2-a}{b-a}\right) \geq
\lambda h \left(\frac{u_1-a}{b-a}\right)+(1-\lambda)h\left(\frac{u_2-a}{b-a}%
\right).
\end{equation*}
Denoting $y_1=\frac{u_1-a}{b-a}\in [0,1]$ and $y_2=\frac{u_2-a}{b-a}\in [0,1]
$ this immediately implies the concavity of h on $[0,1]$ . Then, by Theorem %
\ref{t2} (ii), we get 
\begin{equation*}
\mid U_n^{(M)}\left(f;[a,b] \right)(x)-f(x)\mid=\mid U_n^{(M)}\left(h;[0,1]
\right)(y)-h(y)\mid \leq 2\omega_1\left(h;\frac{1}{n}\right)_{[0,1]}.
\end{equation*}
Reasoning now exactly as in the above point (i), we get the desired
conclusion.
\end{proof}

\begin{theorem}
\label{t7} Let us consider the function $f:[a,b]\rightarrow \mathbb{R}_+$
and let us fix $n\in \mathbb{N}$, $n\geq 1.$ Suppose in addition that there
exists $c\in [a,b]$ such that $f$ is nondecreasing on $[a,c]$ and
nonincreasing on $[c,b].$ Then, there exists $c^{\prime }\in [a,b]$ such
that $U_n^{(M)}\left(f;[a,b]\right)$ is nondecreasing on $[a,c^{\prime }]$
and nonincreasing on $[c^{\prime },b].$ In addition we have $\mid
c-c^{\prime }\mid \leq (b-a)/(n+1)$ and $\mid
U_n^{(M)}\left(f;[a,b]\right)(c)-f(c)\mid \leq
\left([b-a]+1\right)\omega_1\left(f;\frac{1}{\sqrt{n+1}}\right)_{[a,b]}.$
\end{theorem}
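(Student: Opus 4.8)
The plan is to transfer Theorems \ref{t3} and \ref{t6} from the unit interval to $[a,b]$ by means of the increasing affine substitution already exploited in the proof of Theorem \ref{t6}. First I would put $h:[0,1]\to\mathbb{R}_+$, $h(y)=f(a+(b-a)y)$, which is continuous, and recall from the proof of Theorem \ref{t6} the identity
\begin{equation*}
U_n^{(M)}\left(f;[a,b]\right)(x)=U_n^{(M)}\left(h;[0,1]\right)\left(\frac{x-a}{b-a}\right),\qquad x\in[a,b].
\end{equation*}
Denote by $T(y)=a+(b-a)y$ the increasing affine bijection of $[0,1]$ onto $[a,b]$ and set $c_0=T^{-1}(c)=\frac{c-a}{b-a}\in[0,1]$. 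Since $T$ is increasing, the hypothesis that $f$ is nondecreasing on $[a,c]$ and nonincreasing on $[c,b]$ is equivalent to $h=f\circ T$ being nondecreasing on $[0,c_0]$ and nonincreasing on $[c_0,1]$.

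Next I would apply Theorem \ref{t3} to the function $h$: it produces a point $c_0'\in[0,1]$ such that $U_n^{(M)}\left(h;[0,1]\right)$ is nondecreasing on $[0,c_0']$ and nonincreasing on $[c_0',1]$, with $\mid c_0-c_0'\mid\leq\frac{1}{n+1}$ and $\mid U_n^{(M)}\left(h;[0,1]\right)(c_0)-h(c_0)\mid\leq\omega_1\left(h;\frac{1}{n+1}\right)_{[0,1]}$. I would then take $c'=T(c_0')=a+(b-a)c_0'\in[a,b]$. Because $U_n^{(M)}\left(f;[a,b]\right)(x)=U_n^{(M)}\left(h;[0,1]\right)(T^{-1}(x))$ and $T^{-1}$ is increasing, the monotonicity of $U_n^{(M)}\left(f;[a,b]\right)$ on $[a,c']$ and on $[c',b]$ follows immediately from that of $U_n^{(M)}\left(h;[0,1]\right)$ on $[0,c_0']$ and $[c_0',1]$. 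Moreover $\mid c-c'\mid=(b-a)\mid c_0-c_0'\mid\leq\frac{b-a}{n+1}$, which is the first estimate of the theorem.

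For the second estimate, observe that $c=T(c_0)$, hence $U_n^{(M)}\left(f;[a,b]\right)(c)=U_n^{(M)}\left(h;[0,1]\right)(c_0)$ and $h(c_0)=f(c)$, so that
\begin{equation*}
\mid U_n^{(M)}\left(f;[a,b]\right)(c)-f(c)\mid=\mid U_n^{(M)}\left(h;[0,1]\right)(c_0)-h(c_0)\mid\leq\omega_1\left(h;\frac{1}{n+1}\right)_{[0,1]}.
\end{equation*}
To conclude I would repeat the modulus-of-continuity bookkeeping from the proof of Theorem \ref{t6}: from $h=f\circ T$ one gets $\omega_1(h;\delta)_{[0,1]}\leq\omega_1(f;(b-a)\delta)_{[a,b]}$, and together with the property $\omega_1(f;\lambda\delta)_{[a,b]}\leq([\lambda]+1)\omega_1(f;\delta)_{[a,b]}$ this yields $\omega_1\left(h;\frac{1}{n+1}\right)_{[0,1]}\leq([b-a]+1)\omega_1\left(f;\frac{1}{n+1}\right)_{[a,b]}$; finally, since $\frac{1}{n+1}\leq\frac{1}{\sqrt{n+1}}$ and the modulus of continuity is nondecreasing in its argument, $\omega_1\left(f;\frac{1}{n+1}\right)_{[a,b]}\leq\omega_1\left(f;\frac{1}{\sqrt{n+1}}\right)_{[a,b]}$, which gives exactly the stated bound.

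I do not expect a real obstacle here, since this is a pure change-of-variables transfer; the only points requiring care are the same ones that already arise in Theorem \ref{t6}, namely that the substitution $T$ is increasing (so monotonicity directions are preserved and not reversed) and that the modulus of continuity rescales correctly under $T$. The only slightly delicate bookkeeping matter is the index range: Theorem \ref{t3} is stated for $n\geq2$, so the argument above delivers the conclusion for $n\geq2$, and the hypothesis $n\geq1$ in the present statement should presumably be read as $n\geq2$ to match Theorem \ref{t3} (the case $n=1$ would otherwise have to be inspected directly from the two-term formula for $U_1^{(M)}$).
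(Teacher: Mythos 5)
Your proposal is correct and follows essentially the same route as the paper's own proof: construct $h(y)=f(a+(b-a)y)$, apply Theorem \ref{t3} to $h$ on $[0,1]$, and transfer the monotonicity point and the error estimate back to $[a,b]$ via the increasing affine substitution together with the rescaling property of the modulus of continuity. Your additional remarks --- the explicit step $\omega_1\left(f;\frac{1}{n+1}\right)_{[a,b]}\leq\omega_1\left(f;\frac{1}{\sqrt{n+1}}\right)_{[a,b]}$ and the observation that Theorem \ref{t3} requires $n\geq 2$ while the statement allows $n\geq 1$ --- are points the paper glosses over, and both are accurate.
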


\begin{proof}
We construct the function $h$ as in the previous theorem. Let $c_1\in [0,1]$
be such that $h(c_1)=c$ Since $h$ is the composition between $f$ and the
linear nondecreasing function $t\rightarrow a+(b-a)t,$ we get that $h$ is
nondecreasing on $[0,c_1]$ and nonincreasing on $[c_1,1].$ By Theorem \ref%
{t3} it results that there exists $c^{\prime }\in [0,1]$ such that $%
U_n^{(M)}\left(h;[0,1]\right)$ is nondecreasing on $[0,c_1^{\prime }]$ and
nonincreasing on $[c_1^{\prime },1]$ and in addition we have $\mid
U_n^{(M)}\left(h;[0,1]\right)(c_1)-h(c_1)\mid \leq \omega_1\left(h;1/n+1??
\right) $ and $\mid c_1-c_1^{\prime }\mid \leq 1/(n+1).$ Let $c^{\prime
}=a+(b-a)c_1^{\prime }.$ If $x_1,x_2 \in [a,c^{\prime }]$ with $x_1 \leq x_2$
then let $y_1,y_2 \in [0,c_1^{\prime }]$ be such that $x_1=a+(b-a)y_1$ and $%
x_2=a+(b-a)y_2.$ Than it follows that $U_n^{(M)}\left(f;[a,b]%
\right)(x_1)=U_n^{(M)}\left(h;[0,1]\right)(y_1)$ and $U_n^{(M)}\left(f;[a,b]%
\right)(x_2)=U_n^{(M)}\left(h;[0,1]\right)(y_2).$ the monotonicity of $%
U_n^{(M)}\left(h;[0,1]\right)$ implies $U_n^{(M)}\left(h;[0,1]\right)(y_1)%
\leq U_n^{(M)}\left(h;[0,1]\right)(y_2)$ that is $U_n^{(M)}\left(f;[a,b]%
\right)(x_1)\leq U_n^{(M)}\left(f;[a,b]\right)(x_2).$ We thus obtain that $%
U_n^{(M)}\left(f;[a,b]\right)$ is nondecreasing on $[a,c^{\prime }].$ Using
the same type of reasoning, we obtain that $U_n^{(M)}\left(f;[a,b]\right)$
is nonincreasing on $[c^{\prime }, b].$ For the rest of the proof, noting
that $\mid c_1-c_1\hat{a}\euro 
{{}^2}
\mid \leq 1/(n + 1)$ we get $\mid c-c^{\prime }\mid= \mid
(b-a)(c_1-c_1^{\prime })\mid \leq (b-a)/(n + 1).$ Finally, noting that 
\begin{equation*}
\mid U_n^{(M)}\left(h;[0,1]\right)(c_1)-h(c_1)\mid \leq \omega_1\left(h;%
\frac{1}{n+1}\right)_{[0,1]}
\end{equation*}
and taking into account that $\omega_1\left(h;\frac{1}{n+1}%
\right)_{[0,1]}\leq \left([b-a]+1\right)\omega_1\left(f;\frac{1}{n+1}%
\right)_{[a,b]},$ we obtain 
\begin{equation*}
\begin{split}
\mid U_n^{(M)}\left(f;[a,b]\right)(c)-f(c)\mid =& \mid
U_n^{(M)}\left(h;[0,1]\right)(c_1)-h(c_1)\mid \\
\leq &\omega_1\left(h;\frac{1}{\sqrt{n+1}}\right)_{[0,1]} \leq
\left([b-a]+1\right)\omega_1\left(f;\frac{1}{\sqrt{n+1}}\right)_{[a,b]} \\
\end{split}%
\end{equation*}
and the proof is complete.
\end{proof}

\begin{remark}
From the above theorem and Remark \ref{r3.1}, it results that if $%
f:[a,b]\rightarrow \mathbb{R}_+$ is continuous and quasi-concave then $%
U_n^{(M)}(f)$ is quasi-concave too. As we have mentioned in the
Introduction, for functions in the space $C_+([0,1])$, $U_n^{(M)}$ preserves
the monotonicity and the quasi-convexity. Reasoning similar as in the proof
of Theorem \ref{t7}, it can be proved that these preservation properties
hold in the general case of the space $C_+([a, b])$.
\end{remark}

\section{Applications to the approximation of fuzzy numbers}

\begin{lemma}
\label{12} Let $a,b\in \mathbb{R}$, $a< b.$ For $n\in \mathbb{N}, \ n\geq 2$
and $k\in \left \lbrace 0,1,\cdots, n\right \rbrace , j\in \left \lbrace
0,1,\cdots, n-2\right \rbrace$ and $x\in \left(a+j.(b-a)/(n-1),
a+(j+1).(b-a)/(n-1) \right).$ Let $m_{k,n,j}(x)=\frac{b_{n,k}(x)}{b_{n,j}(x)}%
,$ where recall that $b_{n,k}(x)=\binom{n+k-1}{k}\left( \frac{x-a}{b-a}%
\right) ^k(\frac{b-2a+x}{b-a})^{-n-k}.$ Then $m_{k,n,j}(x)\leq 1.$
\end{lemma}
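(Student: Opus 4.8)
The plan is to reduce the statement to the unit–interval case by the change of variables already used in the proofs of Theorems~\ref{t6} and \ref{t7}. Put $y=(x-a)/(b-a)$; then $(b-2a+x)/(b-a)=1+y$, so
\[
b_{n,k}(x)=\binom{n+k-1}{k}y^{k}(1+y)^{-n-k}
\]
is exactly the classical truncated Baskakov basis function evaluated at $y$, and the hypothesis $x\in\bigl(a+j(b-a)/(n-1),\,a+(j+1)(b-a)/(n-1)\bigr)$ translates into $y\in\bigl(j/(n-1),(j+1)/(n-1)\bigr)\subset(0,1)$. In particular $b_{n,j}(x)>0$, so $m_{k,n,j}(x)=b_{n,k}(x)/b_{n,j}(x)$ is well defined and equals the corresponding quantity $m_{k,n,j}(y)$ of the unit interval. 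Thus it suffices to show $m_{k,n,j}(y)\le 1$ for $y$ in that subinterval; this is precisely the content of \cite{bede2}, Lemma~3.2, but I would give the short self-contained argument below.

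First I would record the closed form
\[
m_{k,n,j}(y)=\frac{\binom{n+k-1}{k}}{\binom{n+j-1}{j}}\left(\frac{y}{1+y}\right)^{k-j},
\]
which gives $m_{j,n,j}(y)=1$. The one computation that matters is the ratio of consecutive terms: since $\binom{n+k}{k+1}\big/\binom{n+k-1}{k}=(n+k)/(k+1)$,
\[
\frac{m_{k+1,n,j}(y)}{m_{k,n,j}(y)}=\frac{n+k}{k+1}\cdot\frac{y}{1+y},
\]
and rearranging shows that this quantity is $\le 1$ if and only if $y\le(k+1)/(n-1)$ (the hypothesis $n\ge 2$ makes $n-1>0$, so all the divisions and the manipulation are legitimate).

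Then I would finish by telescoping in the two directions around the index $j$. If $k\ge j$: for each $k'$ with $j\le k'\le k-1$ we have $k'+1\ge j+1$, hence $y<(j+1)/(n-1)\le(k'+1)/(n-1)$, so every factor $m_{k'+1,n,j}(y)/m_{k',n,j}(y)$ is $\le 1$; multiplying them from $k'=j$ to $k'=k-1$ (an empty, hence trivial, product when $k=j$) gives $m_{k,n,j}(y)\le m_{j,n,j}(y)=1$. If $k<j$: for each $k'$ with $k\le k'\le j-1$ we have $k'+1\le j$, hence $y>j/(n-1)\ge(k'+1)/(n-1)$, so now each factor $m_{k',n,j}(y)/m_{k'+1,n,j}(y)$ is $\le 1$; multiplying from $k'=k$ to $k'=j-1$ again yields $m_{k,n,j}(y)\le m_{j,n,j}(y)=1$. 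Undoing the substitution proves $m_{k,n,j}(x)\le 1$.

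There is no real obstacle here: the argument is essentially bookkeeping. The only thing to be careful about is using the correct direction of the consecutive–ratio inequality on each side of $j$, and observing that it is exactly the strict inequalities defining the open interval for $x$ (equivalently $j/(n-1)<y<(j+1)/(n-1)$) that make both telescoped products $\le 1$; equality $m_{k,n,j}(x)=1$ holds only for $k=j$.
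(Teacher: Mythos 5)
Your proposal is correct and follows essentially the same route as the paper: reduce to the unit interval by the substitution $y=(x-a)/(b-a)$, and then exploit the fact that $k\mapsto m_{k,n,j}$ increases up to $k=j$ and decreases afterwards, which comes down to the same consecutive-ratio computation $\frac{m_{k+1,n,j}}{m_{k,n,j}}=\frac{n+k}{k+1}\cdot\frac{y}{1+y}$ that the paper carries out for $k=j$. The only difference is that you prove the two monotonicity chains yourself by telescoping, where the paper imports them from Lemma 3.2 of \cite{bede2} and only verifies the adjacent ratios; your version is self-contained but not a different argument.
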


\begin{proof}
Without any loss of generality we may suppose that $a = 0$ and $b = 1$,
because using the same reasoning as in the proof of Theorems \ref{t6} and %
\ref{t7} we easily obtain the conclusion of the lemma in the general case.
So, let us fix $x\in (j/(n-1),(j+1)/(n-1)).$ According to Lemma 3.2 in \cite%
{bede2} we have 
\begin{equation*}
m_{0,n,j}(x)\leq m_{1,n,j}(x)\leq \cdots \leq m_{j,n,j}(x),
\end{equation*}
\begin{equation*}
m_{j,n,j}(x)\geq m_{j+1,n,j}(x)\geq \cdots \geq m_{n,n,j}(x).
\end{equation*}

Since $m_{j,n,j}(x)=1$, it suffices to prove that $m_{j+1,n,j}(x)<1$ and $%
m_{j-1,n,j}(x)<1$. By direct calculations we get 
\begin{equation*}
\frac{m_{j,n,j}(x)}{m_{j+1,n,j}(x)}=\frac{j+1}{n+j}\frac{1+x}{x}.
\end{equation*}
Since the function $g(y)=(1+x)/x$ is strictly decreasing on the interval $%
\left[\frac{j}{n-1}, \frac{(j+1)}{n-1}\right]$, it results that $\frac{1+x}{x%
}> \frac{n+j}{j+1}.$ Clearly, this implies $m_{j,n,j}(x)/m_{j+1,n,j}(x)>1$
that is $m_{j,n,j}(x)<1$ By similar reasonings we get that $m_{j-1,n,j}(x)<1$
and the proof is complete.
\end{proof}

Let us consider now a function $f\in C_+([a,b]).$ Combining formula(\ref{f7}%
) with the conclusion of Lemma \ref{12} we can simplify the method to
compute $U_n^{(M)}(f;[a,b])(x)$ for some $x\in [a,b]$. Let us choose $j \in
\left \lbrace 0,1,...,n-2\right \rbrace $ and $x \in
[a+(b-a)j/(n-1),a+(b-a)(j+1)/(n-1)].$ By properties of continuous functions,
an immediate consequence of Lemma \ref{12} is that $m_{k,n,j}(x)\leq 1$ for
all $k\in \left \lbrace 0,1,...,n\right \rbrace .$ This implies that 
\begin{equation}  \label{f8}
\bigvee_{k=0}^n b_{n,k}(x)=b_{n,j}(x), x\in \left[
a+(b-a)j/(n-1),a+(b-a)(j+1)/(n-1)\right].
\end{equation}
Therefore, denoting for each $k\in \left \lbrace 0,1,...,n\right \rbrace $
and

$x\in \left[ a+(b-a)j/(n-1),a+(b-a)(j+1)/(n-1)\right]$ 
\begin{equation}  \label{f9}
f_{k,n,j}(x)=m_{k,n,j}(x).f(a+(b-a)k/n),
\end{equation}
by (\ref{f7}) and (\ref{f8}) we obtain 
\begin{equation}  \label{f10}
U_n^{(M)}(f;[a,b])(x)=\bigvee_{k=0}^nf_{k,n,j}(x), \ x\in \left[
a+(b-a)j/(n-1),a+(b-a)(j+1)/(n-1)\right].
\end{equation}
The above formula generalizes a similar formula from paper \cite{bede2}
where the particular case $a=0, b=1$ is considered. By Lemma \ref{12} we
also note that for any $k\in \left \lbrace 0,1,...,n\right \rbrace $ and $x
\in \left[ a+(b-a)j/(n-1),a+(b-a)(j+1)/(n-1)\right]$, we have $%
f_{k,n,j}(x)\leq f(a+(b-a)k/n).$

\begin{lemma}
\label{lemma13} Let $a,b \in \mathbb{R},$ $a< b.$ If $f:[a,b]\rightarrow 
\mathbb{R}_+$ is bounded then we have $U_{n}^{\left( M\right)}\left(
f;[a,b]\right)(a+j(b-a)/(n-1))\geq f(a+j(b-a)/(n-1))$ for all $j\in \left
\lbrace0,1,\cdots n-2 \right \rbrace.$
\end{lemma}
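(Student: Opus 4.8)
The plan is to transport the problem to the unit interval and then read the inequality off the built-in normalization of the coefficients $m_{k,n,j}$. Exactly as in the proofs of Theorems \ref{t6} and \ref{t7}, I would set $h(y)=f(a+(b-a)y)$ for $y\in[0,1]$; this is bounded whenever $f$ is, and it satisfies $U_n^{(M)}(f;[a,b])(a+(b-a)y)=U_n^{(M)}(h;[0,1])(y)$ together with $h(k/n)=f(a+(b-a)k/n)$. Under this correspondence the knot $a+j(b-a)/(n-1)$ becomes $y_j:=j/(n-1)$ and the stated inequality becomes $U_n^{(M)}(h;[0,1])(y_j)\ge h(y_j)$. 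So it suffices to prove the latter for a bounded $h\colon[0,1]\to\mathbb{R}_+$ and each $j\in\{0,1,\dots,n-2\}$.

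Next I would fix $j$ and regard $y_j=j/(n-1)$ as the left endpoint of the $j$-th cell $[j/(n-1),(j+1)/(n-1)]$, on which formula (\ref{f10}) reads $U_n^{(M)}(h;[0,1])(y)=\bigvee_{k=0}^{n} f_{k,n,j}(y)$ with $f_{k,n,j}(y)=m_{k,n,j}(y)\,h(k/n)$. The coefficient singled out by the index $k=j$ is identically one, $m_{j,n,j}(y)=b_{n,j}(y)/b_{n,j}(y)\equiv 1$, so keeping only that term already gives
\begin{equation*}
U_n^{(M)}(h;[0,1])(y_j)\ \ge\ f_{j,n,j}(y_j)\ =\ h\!\left(\tfrac{j}{n}\right).
\end{equation*}
For $j\ge 1$ I would moreover note that the left neighbour is unit at this endpoint as well: from the ratio $m_{j,n,j}(y)/m_{j-1,n,j}(y)=\frac{n+j-1}{j}\cdot\frac{y}{1+y}$ (a short computation with the $b_{n,k}$), evaluation at $y=y_j=j/(n-1)$ gives $\frac{y}{1+y}=\frac{j}{n+j-1}$, whence the ratio equals $1$ and $m_{j-1,n,j}(y_j)=1$. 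Consequently $U_n^{(M)}(h;[0,1])(y_j)\ge\max\{h((j-1)/n),\,h(j/n)\}$, and the same reasoning on the left-hand cell (via $m_{j,n,j-1}(y_j)=1$) confirms the continuity of the operator across the knot.

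The remaining step -- passing from these sampling-node values to the value of $h$ at the knot $y_j$ itself -- is where I expect the genuine obstacle to lie, and it is the only place where the hypotheses really get tested. The point is that $y_j=j/(n-1)$ is \emph{not} a sampling node: one checks $j/n<j/(n-1)<(j+1)/n$, so the knot sits strictly inside the cell $[j/n,(j+1)/n]$, at distance $j/(n(n-1))\le 1/n$ to the right of the controlled node $j/n$. For $j=0$ there is nothing to do, since $y_0=0$ is itself a node and $U_n^{(M)}(h;[0,1])(0)=h(0)$. When the relevant portion of $h$ is nonincreasing, the bound $U_n^{(M)}(h;[0,1])(y_j)\ge h(j/n)$ upgrades at once to $\ge h(y_j)$ because $j/n<y_j$; this is precisely the situation of the nonincreasing (right) side of a fuzzy number, where the lemma is to be applied. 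Securing the inequality in full generality, however, requires comparing $h(y_j)$ with the dominating node value across the cell $[j/n,(j+1)/n]$, and making this comparison yield an exact inequality -- rather than one only up to a modulus of continuity -- is the hard part of the argument and the step I would invest the most care in.
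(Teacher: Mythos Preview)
Your transport to $[0,1]$ and the bound $U_n^{(M)}(h)(y_j)\ge h(j/n)$ via $m_{j,n,j}\equiv 1$ are correct, and so is your instinct that the passage from $h(j/n)$ to $h(y_j)$ is where everything hinges. But that step is not merely hard: it is impossible in the stated generality, because the lemma is false for arbitrary bounded $f$. Take $[a,b]=[0,1]$, $n=3$, $j=1$; then the knot is $y_1=\tfrac12$ while the sampling nodes are $0,\tfrac13,\tfrac23,1$, and any bounded $f\ge 0$ that vanishes at all four nodes yet has $f(\tfrac12)>0$ gives $U_3^{(M)}(f)(\tfrac12)=0<f(\tfrac12)$. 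The operator depends on $f$ only through the node values $f(k/n)$, so a spike located at a knot is invisible to it; no amount of care will close this gap.

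The paper's own argument stops at exactly the same place you do. After invoking Lemma~\ref{12} it writes $\bigvee_k b_{n,k}=b_{n,j}$, keeps only the $k=j$ term, and concludes with $\ge f\bigl(a+j(b-a)/n\bigr)$ --- the node value, i.e.\ your intermediate bound --- rather than $f\bigl(a+j(b-a)/(n-1)\bigr)$; the displayed chain in fact oscillates between denominators $n$ and $n-1$ without resolving the discrepancy. What both arguments actually establish is
\[
U_n^{(M)}\bigl(f;[a,b]\bigr)\Bigl(a+\tfrac{j(b-a)}{n-1}\Bigr)\ \ge\ f\Bigl(a+\tfrac{j(b-a)}{n}\Bigr),
\]
and this weaker statement is what is really needed in the sole application (Theorem~\ref{teo14}), where the relevant node $a+j(b-a)/n$ is chosen inside $\mathrm{core}(u)$ so that the right-hand side already equals $1$. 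The honest repair is simply to restate the lemma with the node value $f\bigl(a+j(b-a)/n\bigr)$ on the right; your argument is then a complete proof of that corrected version.
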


\begin{proof}
From Lemma \ref{12}, since

$a+j(b-a)/(n-1) \in \left( a+(b-a)j/(n-1),a+(b-a)(j+1)/(n-1)\right)$ and $%
m_{k,n,j}(a+j(b-a)/(n-1))=\frac{b_{n,k}\left(a+j(b-a)/(n-1)\right)}{%
b_{n,j}\left(a+j(b-a)/(n-1)\right)}$ for all $k\in \left \lbrace0,1,\cdots,n
\right \rbrace $ it follows that 
\begin{equation*}
\bigvee_{k=0}^n b_{n,k}\left(a+j(b-a)/n \right)=b_{n,j}\left(a+j(b-a)/n
\right).
\end{equation*}
Then we get 
\begin{equation*}
\begin{split}
U_{n}^{\left( M\right)}\left( f;[a,b]\right)(a+j(b-a)/n)=&\frac{%
\bigvee_{k=0}^n b_{n,k}\left(a+j(b-a)/n \right)f(a+j(b-a)/n)}{
b_{n,j}\left(a+j(b-a)/n \right)} \\
& \geq \frac{b_{n,j}\left(a+j(b-a)/n \right)f(a+j(b-a)/n)}{
b_{n,j}\left(a+j(b-a)/n \right)} \\
& = f(a+j(b-a)/n).
\end{split}%
\end{equation*}
and lemma is proved.
\end{proof}

\begin{theorem}
\label{teo14} Let $u$ be a fuzzy number with supp$\left( u\right) =\left[a,b%
\right]$ and core$\left( u\right) =\left[ c,d\right] $ such that $a\leq
c<d\leq b.$ Then \ for sufficiently large $n,$ it result that $\widetilde{U}%
_{n}^{\left( M\right) }\left( u;\left[a,b\right] \right)$ is a fuzzy number
such that :

\begin{itemize}
\item[i.] $supp(u)=supp(\widetilde{U}_{n}^{\left( M\right) }\left( u;\left[
a,b\right] \right) );$

\item[ii.] if $core(\widetilde{U}_{n}^{\left( M\right) }\left( u;\left[a,b %
\right] \right) )=[c_{n},d_{n}]$, then $c_{n}$ and $d_{n}$ can be determined
precisely and in addition we have $\left \vert c-c_{n}\right \vert \leq
(b-a)/n $ and $\left \vert d-d_{n}\right \vert \leq (b-a)/n$;

\item[iii.] if, in addition, $u$ is continuous on $[a,b]$, then%
\begin{equation*}
\left \vert \widetilde{U}_{n}^{\left( M\right) }\left( u;\left[a,b\right]
\right) -u\left( x\right) \right \vert \leq 6\left( \left[ b-a\right]
+1\right) \omega _{1}\left( u,\frac{1}{\sqrt{n}}\right) _{\left[a,b\right] },
\end{equation*}
\end{itemize}

for all $x\in \mathbb{R}$.
\end{theorem}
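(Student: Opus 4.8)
The plan is to establish the three assertions in order, exploiting the fact (proved in Theorem~\ref{t7} and the subsequent remark) that $U_n^{(M)}$ preserves quasi-concavity, monotonicity, and the location/value of the maximal plateau up to an error of order $(b-a)/n$. First I would record the membership-function description of $u$: by the hypotheses $u$ is $0$ outside $[a,b]$, nondecreasing on $[a,c]$, equal to $1$ on $[c,d]$, nonincreasing on $[d,b]$, hence in particular $u$ is quasi-concave on $[a,b]$ with global maximum value $1$ attained exactly on $[c,d]$. Throughout, $\widetilde{U}_n^{(M)}(u;[a,b])$ denotes the extension by $0$ outside $[a,b]$ of $U_n^{(M)}(u;[a,b])$.

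For (i), the support claim: since $U_n^{(M)}(u;[a,b])$ is continuous, positive on $[a,b]$, and by Lemma~\ref{lemma13} satisfies $U_n^{(M)}(u;[a,b])(a+j(b-a)/(n-1))\ge u(a+j(b-a)/(n-1))>0$ for interior nodes, and since $\widetilde U_n^{(M)}$ vanishes outside $[a,b]$, it suffices to check the endpoint behaviour. At $x=a$ we have $b_{n,0}(a)$ dominating (indeed $b_{n,k}(a)=0$ for $k\ge1$ since $(x-a)^k$ vanishes), so $U_n^{(M)}(u;[a,b])(a)=u(a)=0$; a symmetric computation, or the fact that $u(b)=0$ together with the formula $U_n^{(M)}(u;[a,b])(x)=\bigvee_{k=0}^n f_{k,n,j}(x)$ and continuity, gives the correct value at $b$. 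Hence $\mathrm{supp}(\widetilde U_n^{(M)}(u;[a,b]))=[a,b]=\mathrm{supp}(u)$, which also confirms that the extension is a legitimate fuzzy number (normality and fuzzy convexity follow from quasi-concavity preservation plus the fact that the maximum value is $1$, which I verify next).

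For (ii), I would apply Theorem~\ref{t7} to $u$ restricted to each of the two monotone pieces, or more directly invoke the structure from the proof of Theorem~\ref{t3}: the maximum of $U_n^{(M)}(u;[a,b])$ is attained on an interval $[c_n,d_n]$ whose endpoints are determined by the nodal index $j_c$ (the index with $c\in[a+j_c(b-a)/(n-1),\,a+(j_c+1)(b-a)/(n-1)]$) and similarly for $d$; the explicit description of $c^{\prime}$ in the proof of Theorem~\ref{t3} as $a+(b-a)j_c/(n-1)$ or $a+(b-a)(j_c+1)/(n-1)$ gives the exact determination of $c_n$, and analogously $d_n$. The estimate $|c-c_n|\le (b-a)/n$ (and likewise for $d$) follows from $|c-c^{\prime}|\le (b-a)/(n+1)\le (b-a)/n$ as in Theorem~\ref{t7}; for ``sufficiently large $n$'' one needs $c_n<d_n$, i.e.\ the perturbed core is still non-degenerate, which holds once $(b-a)/n<(d-c)/2$. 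Moreover, since on the core $u\equiv1$ and by Lemma~\ref{lemma13} the operator value at an interior node lying in $[c,d]$ is $\ge1$, while $U_n^{(M)}(u;[a,b])\le \max u=1$ always, the plateau value of $\widetilde U_n^{(M)}(u;[a,b])$ is exactly $1$; this is the point that makes the extension normal.

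For (iii): on $[a,b]$ the bound is Theorem~\ref{t6}(i), which gives $24([b-a]+1)\omega_1(u;1/\sqrt{n+1})_{[a,b]}$; since $1/\sqrt{n+1}\le 1/\sqrt{n}$ we may replace the argument by $1/\sqrt n$, and the constant $24$ can be improved to $6$ by using the finer quasi-concave estimate — namely, applying Theorem~\ref{t6} separately to the monotone (hence half of a concave-type) pieces of $u$, where the non-decreasing/non-increasing structure yields the better constant, exactly as the improvement from $24\omega_1(f;1/\sqrt n)$ to a smaller constant is obtained in \cite{bede2} for monotone functions. Off $[a,b]$, both $\widetilde U_n^{(M)}(u;[a,b])$ and $u$ vanish, so the inequality is trivial there, and continuity at the endpoints (established in (i)) glues the two regions. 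The main obstacle I anticipate is assertion (ii): one must be careful that the two monotone branches of $u$ are handled consistently so that the maximal plateau $[c_n,d_n]$ is correctly identified as a single interval (the left endpoint coming from the increasing branch, the right from the decreasing branch) and that the node indices $j_c$ and $j_d$ are genuinely distinct for large $n$ — this is exactly where the ``sufficiently large $n$'' hypothesis is consumed, and it requires the quantitative bound $(b-a)/n<(d-c)/2$ rather than a soft compactness argument.
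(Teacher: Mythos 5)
Your overall strategy for (i) tracks the paper's, but there are two genuine gaps in (ii) and (iii). In (ii): Theorem \ref{t7} (via Theorem \ref{t3}) only produces a single point $c'$ at which the quasi-concave function $U_n^{(M)}(u;[a,b])$ switches from nondecreasing to nonincreasing, together with $|c-c'|\le (b-a)/(n+1)$. That is not the same as determining the core $[c_n,d_n]=\{x:\widetilde{U}_{n}^{(M)}(u;[a,b])(x)=1\}$: a quasi-concave function can attain its maximum on an interval much larger than any neighbourhood of $c'$, and a bound on $|c-c'|$ says nothing about $|c-c_n|$ or $|d-d_n|$, which concern the \emph{endpoints of the plateau}. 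The paper instead introduces node indices $k(n,c),k(n,d)$ on the grid $a+(b-a)k/n$, shows $\widetilde{U}_{n}^{(M)}(u;[a,b])\equiv 1$ on $[a+(b-a)k(n,c)/n,\,a+(b-a)(k(n,d)+1)/n]$ using Lemma \ref{lemma13}, and --- this is the step your sketch omits --- proves the \emph{strict} inequality $\widetilde{U}_{n}^{(M)}(u;[a,b])(x)<1$ for $x$ strictly between the nodes adjacent to the core, by checking term by term that $u_{k,n,j}(x)=m_{k,n,j}(x)\,u(a+(b-a)k/n)<1$ (either $m_{k,n,j}(x)<1$ strictly, or $u$ at that node is $<1$). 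Only this strictness pins the plateau down exactly and yields $|c-c_n|\le (b-a)/n$ and $|d-d_n|\le (b-a)/n$; your appeal to the location of $c'$ does not.

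In (iii), your mechanism for obtaining the constant $6$ fails. You propose to apply Theorem \ref{t6} separately to the two monotone pieces of $u$, but the max-product operator is not local: for $x\in[a,c]$ the value $U_n^{(M)}(u;[a,b])(x)$ is a maximum over \emph{all} nodes $a+(b-a)k/n$, $k=0,\dots,n$, so it does not coincide with $U_n^{(M)}\bigl(u|_{[a,c]};[a,c]\bigr)(x)$, and no gluing of branch-wise estimates is available. Moreover, the improved constant in Theorem \ref{t6}(ii) requires concavity, not monotonicity or quasi-concavity, and $u$ need not be concave on either branch. (To be fair, the paper itself merely cites Theorem \ref{t6} for part (iii), which gives $24([b-a]+1)\omega_1(u;1/\sqrt{n+1})$ rather than $6([b-a]+1)\omega_1(u;1/\sqrt{n})$, so the constant $6$ is not substantiated there either; but the specific repair you propose is not valid.)
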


\begin{proof}
Let $n\in \mathbb{N}$, such that $(b-a)/n< d-c$.Theorem \ref{t7}, there is $%
c^{\prime }\in [a, b]$ such that $\widetilde{U}_{n}^{\left( M\right) }\left(
u;\left[a, b\right] \right)$ is nondecreasing on $[a,c^{\prime }]$ and
nonincreasing on $[c^{\prime },b].$ Beside, from the description of $%
\widetilde{U}_{n}^{\left( M\right) }\left( u;\left[a,b\right] \right)$, it
gives us that \newline
By indicating $\parallel . \parallel$ the uniform norm on $B([a, b])$ the
space of bounded functions on $[a,b]$, $\parallel \widetilde{U}_{n}^{\left(
M\right) }\left( u;\left[a,b\right] \right)\parallel \leq \parallel
u\parallel$ and since $\parallel u\parallel=1,$ it follows that $\parallel 
\widetilde{U}_{n}^{\left( M\right) }\left( u \right)\parallel \leq 1.$
Consequently, it is sufficient to demonstrate that $\widetilde{U}%
_{n}^{\left( M\right) }\left( u \right)$ is a fuzzy number in order to
obtain existence of $\alpha \in [a,b]$ such that $\widetilde{U}_{n}^{\left(
M\right) }\left( u \right)(\alpha)=1.$ Let $\alpha=a+j(b-a)/n$ where $j$ is
choosen with the property that $c< \alpha <d.$ Such $j$ exists as $%
(b-a)/n<d-c.$ Since $\alpha \in core(u),$ it results $u(\alpha)=1.$
Also,from Lemma \ref{lemma13}, we can write that $\widetilde{U}_{n}^{\left(
M\right) }\left( u;\left[ a,b\right]\right)(\alpha)\geq u(\alpha)$ and
obviously this means that $\widetilde{U}_{n}^{\left( M\right) }\left( u;%
\left[ a,b\right]\right)$ is a fuzzy number. Since we have $F_{n}^{\left(
M\right) }\left( u;\left[ a,b\right]\right)(a)=u(a)$, $U_{n}^{\left(
M\right) }\left( u;\left[ a,b\right]\right)(b)=u(b)$ and the definitions of $%
u$ and $\widetilde{U}_{n}^{\left( M\right) }\left( u;\left[ a,b\right]\right)
$, it follows that $\widetilde{U}_{n}^{\left( M\right) }\left( u;\left[ a,b%
\right]\right)(x)=0$ outside $[a,b].$ Now, by $u(x)> 0$ and $\widetilde{U}%
_{n}^{\left( M\right) }\left( u;\left[ a,b\right]\right)(x)=U_{n}^{\left(
M\right) }\left( u;\left[ a,b\right]\right)(x)$ for all $x\in (a,b),$ we can
obtain that $\widetilde{U}_{n}^{\left( M\right) }\left( u;\left[ a,b\right]%
\right)(x)>0$ for all $x\in (a,b)$ which proves (i).

Now, let us take $n\in \mathbb{N}$ such that $(b-a)/n \leq d-c.$ Then let us
take $k(n,c), k(n,d)\in \left \lbrace 1, \cdots, n-1 \right \rbrace $ be
with the property that $a+(b-a)(k(n,d)-1)/n < c \leq a+(b-a)k(n,c)/n$ and $%
a+(b-a)k(n,c)/n \leq d < a+(b-a)(k(n,d)+1)/n.$ Since $(b-a)/n\leq d-c$ it is
obvious that $k(n,c)\leq k(n,d).$ Also, by the way $k(n,c)$ and $k(n,d)$
were chosen, we observe that $u(a+(b-a)k/n)=1$ for any $k\in \left \lbrace
k(n,c), \cdots, k(n,d)\right \rbrace$ and $u(a+(b-a)k/n)<1$ for any $k\in
\left \lbrace0, \cdots,n \right \rbrace \setminus \left \lbrace k(n,c),
\cdots, k(n,d)\right \rbrace.$ For some $x\in \left[%
a+k(n,c)(b-a)/n,a+(k(n,c)+1)(b-a)/n \right],$ we have 
\begin{equation*}
\widetilde{U}_{n}^{\left( M\right) }\left( u;\left[ a,b\right]%
\right)(x)=\bigvee_{k=0}^nu_{k,n,k(n,c)}(x).
\end{equation*}
We have 
\begin{equation*}
u_{k(n,c),n,k(n,c)}(x)=m_{k(n,c),n,k(n,c)}(%
\zeta)u(a+(b-a)k(n,c)/n)=u(a+(b-a)k(n,c)/n)=1
\end{equation*}
and by the definition of $k(n,c)$ and by Lemma \ref{lemma13} it is obvious
that for any $k\in \left \lbrace0, \cdots,n \right \rbrace,$ we get 
\begin{equation*}
\widetilde{U}_{n}^{\left( M\right) }\left( u;\left[ a,b\right]%
\right)(x)=u(a+(b-\vartheta)k(n,c)/n)=1,
\end{equation*}
$\forall$ $x\in \left[a+k(n,c)(b-a)/n,a+(k(n,c)+1)(b-a)/n \right].$
Similarly we obtain that 
\begin{equation*}
\widetilde{U}_{n}^{\left( M\right) }\left( u;\left[ a,b\right]%
\right)(x)=u(a+(b-a)k(n,d)/n)=1,
\end{equation*}
$\forall$ $x\in \left[a+k(n,d)(b-a)/n,a+(k(n,d)+1)(b-a)/n \right].$ Now let
us choose arbitrarily $x\in \left(a+(k(n,c)-1)(b-a)/n,a+k(n,c)(b-a)/n
\right),$ we have 
\begin{equation*}
\widetilde{U}_{n}^{\left( M\right) }\left( u;\left[ a,b\right]%
\right)(x)=\bigvee_{k=0}^nu_{k,n,k(n,c)-1}(x).
\end{equation*}
If $k\in \left \lbrace k(n,c), \cdots, k(n,d)\right \rbrace,$ then we get 
\begin{equation*}
\begin{split}
u_{k,n,k(n,c)-1}(x)=&m_{k,n,k(n,c)-1}(x)u(a+(b-a)k/n)< u(a+(b-a)k/n) \\
&=u(a+(b-a)k(n,c)/n)=\widetilde{U}_{n}^{\left( M\right) }\left( u;\left[ a,b %
\right]\right)(a+(b-a)k(n,c)/n).
\end{split}%
\end{equation*}

If $k\not \in \left \lbrace k(n,c), \cdots, k(n,d)\right \rbrace,$ then we
get 
\begin{equation*}
\begin{split}
u_{k,n,k(n,c)-1}(x)=&m_{k,n,k(n,c)-1}(x)u(a+(b-a)k/n)\leq u(a+(b-a)k/n) \\
&< u(a+(b-a)k(n,c)/n)=\widetilde{U}_{n}^{\left( M\right) }\left( u;\left[ a,b%
\right]\right)(a+(b-a)k(n,c)/n).
\end{split}%
\end{equation*}
From the propertiy of quasi-concavity of $\widetilde{U}_{n}^{\left( M\right)
}\left( u;\left[ a,b\right]\right)$ on $[a,b]$ it easily results that 
\begin{equation*}
\widetilde{U}_{n}^{\left( M\right) }\left( u;\left[ a, b\right]\right)(x)< 
\widetilde{U}_{n}^{\left( M\right) }\left( u;\left[ a, b\right]%
\right)(a+(b-a)k(n,c)/n), \  \forall x\in [a,a+k(n,c)(b-a)/n)].
\end{equation*}
Similarly, we get 
\begin{equation*}
\widetilde{U}_{n}^{\left( M\right) }\left( u;\left[ a, b\right]\right)(x)< 
\widetilde{U}_{n}^{\left( M\right) }\left( u;\left[ a,b\right]%
\right)(a+(b-a)(k(n,d)+1)/n),
\end{equation*}
$\forall$ $x\in [a+(k(n,d)+1)(b-a)/n),b]$. From the above inequalities, the
fact that 
\begin{equation*}
\begin{split}
\widetilde{U}_{n}^{\left( M\right) }\left( u;\left[ a,b\right]%
\right)(a+(b-a)k(n,c)/n)=&u(a+(b-a)k(n,c)/n)=u(a+(b-a)k(n,d)/n) \\
=&\widetilde{U}_{n}^{\left( M\right) }\left( u;\left[ a,b\right]%
\right)(a+(b-a)(k(n,d)+1)/n)=1,
\end{split}%
\end{equation*}
we get that $\widetilde{U}_{n}^{\left( M\right) }\left( u;\left[ a,b\right]%
\right)$ reaches its maximum value only in the range $[a+(b-a)k(n,c)/n,
a+(b-a)(k(n,d)+1)/n]$ which by the description of $\widetilde{U}_{n}^{\left(
M\right) }\left( u;\left[ a,b\right]\right)$ implies that $core\left(%
\widetilde{U}_{n}^{\left( M\right) }\left( u;\left[ a, b\right]%
\right)\right)=\left[a+(b-a)k(n,c)/n, a+(b-a)(k(n,d)+1)/n\right].$ Then,
indicating $c_n=a+(b-a)k(n,c)/n$ one can see that both $c_n$ and $c$ belong
to the interval $[a+(b-a)(k(n,c)-1)/n,a+(b-a)k(n,c)/n]$ of length $(b-a)/n$
and hence $|c-c_n|\leq (b-a)/n.$ Correlatively, indicating $d_n
=a+(b-a)(k(n,d)+1)/(n+1)$ we obtain that $|d-dn|\leq (b-a)/n$ and the proof
of statement (ii) is complete. (iii) The proof is immediate by Theorem \ref%
{t6}, taking into account the continuity of $u$.
\end{proof}

\end{document}